\documentclass[11pt,letterpaper]{amsart}

\usepackage{tikz}
\usetikzlibrary{arrows.meta}
\usepackage{url}
\usepackage{graphicx}
\urldef\myurl\url{https://en.wikipedia.org/wiki/Ramanujan%E2%80%93Sato_series}

\usepackage{amssymb}
\usepackage[T1]{fontenc}
\usepackage{mathscinet}
\usepackage{enumitem}

\newcommand{\CT}{\operatorname{CT}}

\allowdisplaybreaks%

{\theoremstyle{plain}%
 \newtheorem{theorem}{Theorem}
 
 \newtheorem{lemma}{Lemma}% 
}
{\theoremstyle{remark}
\newtheorem{remark}{Remark}
}
{\theoremstyle{definition}

}

\title[Ramanujan--Sato series of level 11]{On a problem on Ramanujan--Sato series of level 11}
\author{John M.\ Campbell}
\address{Department of Mathematics and Statistics, Dalhousie University,
Halifax, NS B3H 4R2, Canada}
\email{jh241966@dal.ca}
 
\keywords{Ramanujan--Sato series, D-finite recurrence, modular form, Laurent polynomials}

\subjclass[2020]{Primary 11F03; Secondary 05A10}

\begin{document}

\begin{abstract}
 A longstanding open problem concerning Ramanujan--Sato series is given by determining an expression for the coefficients $c_{11}(n)$ 
 associated with Ramanujan--Sato series of level 11 with a fixed, nested, finite binomial sum for all nonnegative integers $n$, by direct 
 analogy with the known     formulas for the coefficients for lower-level Ramanujan--Sato series (and this is clarified and made explicit in our  
  paper).   This open problem was given by Cooper et al.\ in 2015 [\emph{J.\ Math.\ Anal.\ Appl.}], 
 in Cooper's 2017 textbook on Ramanujan's theta functions, 
 and in Cooper's 2025 contribution to the \emph{Contemp.\ Math.} series. 
 This problem seems to have remained open. 
 We succeed in solving this problem, by introducing and proving a double binomial 
 sum formula for $c_{11}(n)$ that is directly analogous to the coefficient formulas for lower-level Ramanujan--Sato series. 
 \end{abstract}

\maketitle

\section{Introduction and background}
 Ramanujan's series for $\frac{1}{\pi}$ \cite[pp.\ 352--354]{Berndt1994} \cite{Ramanujan1914} are among Ramanujan's most celebrated 
 discoveries. A major area of modern-day research related to Ramanujan's series for $\frac{1}{\pi}$ is given by 
 Ramanujan--Sato series for $\frac{1}{\pi}$. 
 In this paper, we solve, by building on a construction described by GPT-5.6 Sol Pro (which, in turn, was based 
 on the author's suggestions and guidance), 
 a longstanding open problem 
 on Ramanujan--Sato series that dates back to 
 a 2015 paper by Cooper et al.\ \cite{CooperGeYe2015}
 and that has long been highlighted as an open problem in the Wikipedia page 
 on Ramanujan--Sato series\footnote{See \myurl.}. 
 This open problem was also given by Cooper in 2025 \cite{Cooper2025}
 and was given in Cooper's 2017 text 
 on Ramanujan's theta functions \cite[p.\ 567]{Cooper2017}. 

 Set \(c_j=0\) for \(j<0\), and let \(c_0=1\). Define
 $(c_k : k \in \mathbb{N}_{0})$ via the D-finite recurrence such that 
\begin{align}
\begin{split}
 (k+1)^{3} c_{k+1} & = 
 2(2k+1)(5k^2 + 5 k + 2) c_{k} 
 - \\ 
 & \quad \quad 8 k (7k^2 + 1) c_{k-1} + 22 k(k-1)(2k-1) c_{k-2}, 
\end{split}\label{splitrec}
\end{align}
 giving rise to the initial terms 
\begin{multline*}
 (c_k : k \in \mathbb{N}_{0}) = \\ 
 \big( 1, 4, 28, 268, 3004, 36784, 476476, 6418192, 88986172, 1261473136, \ldots \big), 
\end{multline*}
 as indexed in the On-line Encyclopedia of Integer Sequences\footnote{See \url{https://oeis.org/}.} as {\tt A284756}. 
 Now, let $q = e^{2 \pi i \tau}$ for $\Im(\tau) > 0$, 
 and let the Dedekind eta function be defined so that 
 $$ \eta(\tau) = q^{1/24} \prod_{m=1}^{\infty}(1 - q^{m}). $$
 Now, define 
\begin{align*}
 a_{11}(q) & = \sum_{r, s \in \mathbb{Z}} q^{r^2 + r s + 3 s^2}, \\
 z_{11}(q) & = a_{11}^{2}(q), \ \text{and} \\ 
 x_{11}(q) & = \frac{\eta^{2}(\tau) \eta^{2}(11 \tau) }{z_{11}(q)}.
\end{align*}
 By Cooper, Ge, and Ye \cite[Thm.~4.7]{CooperGeYe2015}, the   sequence $(c_k : k \in \mathbb{N}_{0})$ defined above    occurs in the   
  normalized modular expansion
\begin{equation}\label{normalized}
 z_{11}(q)=\sum_{n=0}^{\infty}c_nx_{11}^{n}(q), 
\end{equation}
 so we write \(c_{11}(n):=c_n\). Suitable modular specializations of the weighted generating series
 $$ \sum_{n=0}^{\infty} c_{11}(n) (n + \lambda) X^{n} $$
 then yield Ramanujan--Sato series for $\frac{1}{\pi}$
 of level $11$. For brevity, we often write $c_n$ in place of $c_{11}(n)$. 

\begin{remark}
 With regard to the expansion in \eqref{normalized}, since \(x_{11}(q)=q+O(q^2)\), this expansion determines the 
 sequence $(c_n : n \in \mathbb{N}_{0})$  uniquely.
 The uniqueness asserted here is relative to     the pair       $(a_{11}(q),   x_{11}(q))$;  
      it is not a claim
that the  level \(11\), independently of all choices of
modular groups, Hauptmoduls,  and   modular forms,        determines
a unique coefficient sequence. This contrasts with Cooper's
level-\(10\) theory, in which the three distinct modular pairs
\[
(X_A,Z_A),\qquad (X_B,Z_B),\qquad (X_C,Z_C)
\]
give rise to the three coefficient sequences \(T_A(n)\), \(T_B(n)\),
and \(T_C(n)\)
\cite[Chap.~10, Ex.~9]{Cooper2017}.
\end{remark}

 Notable instances of Ramanujan--Sato series of level 11 include \cite[Eqs.\ (6.3)--(6.4)]{CooperGeYe2015} $$ \frac{1}{2\pi} = 
 \frac{7 \sqrt{11}}{121} \sum_{n=0}^{\infty} c_{n} \left( n + \frac{3}{14} \right) \frac{1}{22^{n}} $$ and $$ \frac{1}{\pi} = 
 \frac{221 \sqrt{11}}{484} \sum_{n=0}^{\infty} c_{n} \left( n + \frac{67}{221} \right) \left( -\frac{1}{44} \right)^{n}. $$

 A longstanding open problem concerning the integer sequence
 $(c_{n} : n \in \mathbb{N}_{0})$ may be formulated as follows. 
 How can $c_{n}$ be written as a fixed, nested, finite binomial sum 
 for all $n \in \mathbb{N}_{0}$, by direct analogy
 with the known formulas for the coefficients for the standard classified lower-level Ramanujan--Sato series? 
 To make this problem explicit and precise, 
 the known formulas for the coefficients of the standard classified Ramanujan--Sato series of levels $\ell \leq 10$
 are all of the form 
\begin{equation}\label{makeexplicit}
t(n) =
\sum_{k_1}\sum_{k_2}\cdots\sum_{k_d}
 z^{p(n, k_1, k_2, \ldots, k_d)} H(n,k_1, k_2, \ldots,k_d) 
\end{equation}
 and satisfy the following: The number \(d\) of summation indices is fixed (independently of \(n\)), allowing for $d = 0$; the finite 
 summation region
is specified by a fixed system of affine-linear inequalities in \(n\)
and the summation indices; the summand factor \(H\) is a fixed 
 product of fixed integer powers of binomial coefficients whose
arguments are affine-linear forms in \(n,k_1, k_2, \ldots,k_d\); 
 the expression $z \neq 0$ is an integer and $p(n, k_1, k_2, \ldots, k_d)$ is a fixed and integer-valued
 affine-linear form
 in the given arguments. 
 Observe that neither
 the summand nor the summation region is permitted to depend on the
 values of the sequence $(t(n) : n \in \mathbb{N}_{0})$ itself. The main problem addressed
 in this paper is to find a formula for $c_{11}(n)$ 
 satisfying the given conditions associated with 
 \eqref{makeexplicit}. 

 For example, by defining
\begin{equation}\label{beta2binomial}
 u_{10B}(n) = \sum_{a=0}^{n} \sum_{b=0}^{a} \binom{2n}{n} \binom{2a}{a}^{-1} \binom{n}{a} \binom{a}{b}^{4}, 
\end{equation}
 the formula $$ \frac{1}{\pi} = \frac{1}{17 \sqrt{95}} \sum_{n=0}^{\infty} u_{10B}(n) \frac{19 \cdot 1824 n + 3983}{ (76^2 + 4)^{n + 
 \frac{1}{2}}} $$ is an instance of a Ramanujan--Sato series of level 10, 
 with reference to the work of Cooper on such series \cite{Cooper2012}. 
 Observe that the coefficient formula in \eqref{beta2binomial} satisfies
 all of the conditions associated with \eqref{makeexplicit}. 
 This illustrates the interest in deriving an analogue of coefficient formulas as in \eqref{beta2binomial}, 
 for the level-11 case. 

 In this paper, we introduce and prove the formula 
\begin{equation}\label{s11Aexplicit}
 c_{11}(n) = \sum_{a=0}^{n} 
 \sum_{b=0}^{n-3a}
 \binom{n}{a} \binom{n-a}{a+b}^{2}
 \binom{2a+2b}{b}
 \binom{2n-4a-2b}{n-3a-b}, 
\end{equation}
 noting that all of the conditions associated with \eqref{makeexplicit} are satisfied, 
 thus solving the problem indicated above due to Cooper et al.\ \cite{CooperGeYe2015}. 

\begin{remark}
 We make use of the standard convention whereby empty sums vanish, so that 
 \eqref{s11Aexplicit} may be rewritten so that 
\begin{equation*}
c_{11}(n)
=
\sum_{\substack{a,b\geq0\\3a+b\leq n}}
\binom{n}{a}
\binom{n-a}{a+b}^{2}
\binom{2a+2b}{b}
\binom{2n-4a-2b}{n-3a-b}, 
\end{equation*}
 again noting that the conditions associated with \eqref{makeexplicit} are satisfied. 
\end{remark}

 In addition to how this is motivated by the given references from Cooper et al., 
 this is motivated by the work of Huber et al.\ that involves level-11 Ramanujan--Sato series 
 \cite{HuberSchultzYe2023}. 
 
 The key to our discovery of and proof of the explicit formula in \eqref{s11Aexplicit} 
 is given by a Laurent polynomial given by Fei in the context of the application of the Mahler 
 measure for 3D Landau--Ginzburg potentials \cite{Fei} 
 along with a corresponding determinantal differential equation of order 3 
 related to the work of Golyshev on pencils of certain $K3$ surfaces \cite{Golyshev2007}. 

\section{A Laurent polynomial given by Fei} 
 We work in the ring 
\[
\mathcal{R} = \mathbb{Z}[x^{\pm1},y^{\pm1},z^{\pm1}]
\]
 of Laurent polynomials in \(x,y,z\). Every
 element $F$ in this ring 
 has a unique finite expansion
\[
F(x,y,z)=
\sum_{(r,s,u)\in\mathbb Z^3}
\mathcal{C}_{r,s,u}x^r y^s z^u,
\]
where all but finitely many of the coefficients \(\mathcal{C}_{r,s,u}\) vanish.
We define the constant-term operator by
\[
\CT_{x,y,z}F:=
\mathcal{C}_{0,0,0}
=
[x^0y^0z^0]F.
\]
More generally, if
\[
\mathcal F(t)=\sum_{n\geq0}F_n(x,y,z)t^n
\in \mathcal{R}[[t]],
\]
then \(\CT_{x,y,z}\) is applied coefficientwise:
\[
\CT_{x,y,z}\mathcal F(t)
:=
\sum_{n\geq0}
\bigl(\CT_{x,y,z}F_n(x,y,z)\bigr)t^n.
\]

Let the Laurent polynomial \(\Lambda(x,y,z)\) be defined by
\begin{equation}
 \Lambda(x,y,z)=
 \frac{(x+y+xy+yz)(x+z+xz+yz)}{xyz}
 +\frac{1}{yz}.
 \label{eq:Lambda}
\end{equation}
This is the representative listed in the \(V_{22}\) row of
Fei's Table~1~\cite{Fei}.

For each \(n\geq0\), define
\begin{equation}
 b_n:=\CT_{x,y,z}\Lambda(x,y,z)^n.
 \label{eq:bn-ct}
\end{equation}
Thus \(b_n\) is, explicitly, the coefficient of the monomial
\(x^0y^0z^0\) in the finite Laurent polynomial \(\Lambda^n\).
The formal period of \(\Lambda\) is
\begin{equation}
 P(t):=\sum_{n\geq0}b_nt^n.
 \label{eq:period-series}
\end{equation}
Since
\[
\frac{1}{1-t\Lambda}
=
\sum_{n\geq0}t^n\Lambda^n
\]
as an identity in the formal power-series ring
\(\mathcal R[[t]]\), applying the constant-term operator
coefficientwise gives the equivalent representation
\begin{equation}
 P(t)
 =
 \CT_{x,y,z}\frac{1}{1-t\Lambda(x,y,z)}
 =
 \sum_{n\geq0}
 \bigl(\CT_{x,y,z}\Lambda(x,y,z)^n\bigr)t^n.
 \label{eq:period}
\end{equation}
 The constant-term interpretation in~\eqref{eq:period} is the standard
residue expansion of the quantum period; see Fei's Definition~1.1 and
Remark~1.2~\cite[Definition~1.1 and Remark~1.2]{Fei}.

\begin{lemma}\label{lem:recurrence}
 The period coefficients $b_n$ satisfy the recurrence \eqref{splitrec}, with $b_0=1$. Consequently,
\begin{equation}
 b_n=c_n\qquad(n\geq 0).
 \label{eq:periodequalssequence}
\end{equation}
\end{lemma}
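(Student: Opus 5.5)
The plan is to show directly that the constant-term sequence $b_n$ satisfies the order-3 recurrence \eqref{splitrec}. Since \eqref{splitrec} has leading coefficient $(k+1)^3\neq 0$ for $k\geq 0$, it determines every term from $c_0=1$ together with the convention $c_j=0$ for $j<0$. Hence any sequence obeying \eqref{splitrec} with the same starting value equals $(c_n)$, which gives \eqref{eq:periodequalssequence}. The preliminary checks are that $b_0=\CT_{x,y,z}1=1$ and that the recurrence at $k=0$ reads $b_1=2\cdot 2\,b_0=4$. The latter matches $b_1=\CT\Lambda=4$: expanding the numerator of \eqref{eq:Lambda}, the constant terms come from $x\cdot yz$, $yz\cdot x$ and $xy\cdot xz$ over $xyz$, plus the extra term; I will verify this count explicitly.

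Step 1: obtain an explicit finite sum formula for $b_n$. Write $\Lambda=\frac{1}{yz}+\frac{(x+y+xy+yz)(x+z+xz+yz)}{xyz}$ and expand $\Lambda^n$ by the binomial theorem in the two summands. Each factor $x+y+xy+yz$ and $x+z+xz+yz$ is then expanded by the multinomial theorem, and the exponent conditions for the monomial $x^0y^0z^0$ are imposed. This gives a constant-term sum with finitely many free indices. By symmetry considerations, and after solving the linear exponent constraints, I expect it to collapse to a double sum of products of binomials. Natural candidates for the resulting form are \eqref{s11Aexplicit} itself or a close relative, since the substitution $y\leftrightarrow z$ exchanges the two factors. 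Either way, $b_n=\sum_{a,b}F(n,a,b)$ with $F$ a proper hypergeometric term.

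Step 2: apply creative telescoping (Zeilberger's algorithm in its multivariate form, or iterated: first telescope in $b$, then in $a$). The goal is operators and certificates $R_1,R_2$, rational in $(n,a,b)$, such that
\[
\sum_{i=0}^{3}p_i(n)F(n+i,a,b)=\Delta_a\bigl(R_1F\bigr)+\Delta_b\bigl(R_2F\bigr),
\]
where the $p_i$ are read off from \eqref{splitrec} shifted by $k=n+2$. Summing over the support makes the right side telescope to zero, because $F$ vanishes outside the finite region cut out by the binomials. The identity itself is then verified by dividing through by $F(n,a,b)$ and checking a rational-function identity.

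As an alternative route to the same recurrence, one can use the Picard--Fuchs approach. $P(t)$ is annihilated by a third-order operator, computable by Griffiths--Dwork reduction or by the Golyshev determinantal equation cited above. Translating that operator into coefficients yields \eqref{splitrec}.

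The main obstacle is Step 2: finding and checking the certificate. The recurrence has order 3 with cubic coefficients, so the certificates will be sizable. I would first run a computer algebra implementation (e.g.\ Koutschan's HolonomicFunctions) to produce them, then present the verification as a finite rational identity. I would also check boundary terms carefully, where the binomial support conventions matter for small $n$. As a sanity check, compute $b_2,\dots,b_5$ directly from $\Lambda^n$ and compare them with $28,268,3004,36784$.
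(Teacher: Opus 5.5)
Your argument is correct in outline, but it follows a genuinely different route from the paper. For this lemma the paper computes nothing itself. It reads the $D3$ operator for the $V_{22}$ row off Fei's Table~1: the heading is $-(\alpha_3,\alpha_2,\alpha_1;\beta_1,\beta_0)$, the entries are $(20,-56,44;8,-8)$, and $\alpha_0=0$. It then takes the coefficient of $t^m$ in $\mathcal L_{11}P=0$, sets $m=n+1$ to obtain \eqref{splitrec}, and finishes with the same uniqueness argument as your first paragraph.

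So your alternative Picard--Fuchs route is the paper's proof, except that the paper imports the operator rather than deriving it by Griffiths--Dwork reduction. Your main route is different. It first performs the explicit extraction, which is the paper's Lemma~\ref{lem:firstsum} (giving \eqref{eq:rjsum}, or \eqref{s11Aexplicit} after $a=n-r$, $b=j-a$). It then certifies the recurrence by creative telescoping, which the paper mentions only in its closing remark as an optional independent check.

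The trade-off is as follows. The paper's proof is short, but its rigor rests entirely on an external table entry and its sign conventions. Yours is self-contained and ends in a finitely checkable rational identity. It would also remove any need for Fei's operator in the main theorem. However, its decisive step is a heavy computation that you defer to software, so it becomes a proof only once the certificates are actually exhibited and checked.

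There are three points to tighten.

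\begin{enumerate}
\item Nothing guarantees rational certificates for the order-$3$ operator itself. Multivariate creative telescoping often returns only a non-minimal telescoper $T$. In that case you must show $T=M\cdot L$, with $L$ the operator of \eqref{splitrec}. You then need to check $(Lb)_n=0$ for enough initial $n$ to propagate through $M$, including past any nonnegative integer roots of its leading coefficient.

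\item Telescoping the recurrence shifted by $k=n+2$, $n\ge 0$, gives only the instances $k\ge 2$. Given $b_0=1$, the instances $k=0,1$ are exactly $b_1=4$ and $b_2=28$. So computing $b_2$ is a required initial condition, not merely a sanity check.

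\item Your count for $b_1$ is wrong as stated, although the total $4$ is right. The term $1/(yz)$ contributes nothing. Also, $xy\cdot xz=x^2yz$ leaves $x$ after division by $xyz$, so it is not constant. The four constant contributions from the numerator $(x+y+xy+yz)(x+z+xz+yz)$ are $x\cdot yz$, $y\cdot xz$, $xy\cdot z$ and $yz\cdot x$.
\end{enumerate}
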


\begin{proof}
Let $ \vartheta:=t\frac{d}{dt}$. 
Fei \cite[Eq.~(1.1)]{Fei} writes the relevant $D3$ operator in the form
\begin{multline*}
 \vartheta^3
 +t\Bigl(\vartheta+\tfrac12\Bigr)
 \bigl(\alpha_3(\vartheta^2+\vartheta)+\beta_1\bigr) 
 +t^2(\vartheta+1)
 \bigl(\alpha_2(\vartheta+1)^2+\beta_0\bigr) + \\
 \alpha_1t^3(\vartheta+2)
 \Bigl(\vartheta+\tfrac32\Bigr)(\vartheta+1)
 +\alpha_0t^4(\vartheta+3)(\vartheta+2)(\vartheta+1).
\end{multline*}
 The heading of the last column of Fei's
Table~1 is
$-(\alpha_3,\alpha_2,\alpha_1;\beta_1,\beta_0)$, and the $V_{22}$ row is
\[
 (20,-56,44;8,-8),
\]
with $\alpha_0=0$ \cite{Fei}. Hence the operator
annihilating $P(t)$ is
\begin{align}
\begin{split}
 \mathcal L_{11}:={} 
 & \vartheta^3
 -t\Bigl(\vartheta+\tfrac12\Bigr)
 \bigl(20(\vartheta^2+\vartheta)+8\bigr) 
 +t^2(\vartheta+1)
 \bigl(56(\vartheta+1)^2+8\bigr) \\
 & \quad -44t^3(\vartheta+2)
 \Bigl(\vartheta+\tfrac32\Bigr)(\vartheta+1), 
\end{split}\label{eq:D3operator}
\end{align}
 so that $\mathcal L_{11}P=0$.

 Here and below, differential operators act on expressions to their right; thus
\[
t^jQ(\vartheta)F(t):=t^j\bigl(Q(\vartheta)F(t)\bigr).
\]
In particular, the displayed powers of \(t\) are not commuted through
the factors involving \(\vartheta\). 
 Taking the coefficient of \(t^m\) in
\(\mathcal L_{11}P(t)=0\), and setting \(b_j=0\) for \(j<0\), gives 
\begin{align}
\begin{split}
 0={}&m^3b_m
 -2(2m-1)(5m^2-5m+2)b_{m-1} 
 + 8 (m- \\
 & 1)\bigl(7(m-1)^2+1\bigr)b_{m-2} - 22(m-1)(m-2)(2m-3)b_{m-3}, 
\end{split}\label{eq:bmrecurrence}
\end{align}
 and this can be verified, for example, 
 by writing 
\[
 \Bigl(m-\tfrac12\Bigr)\bigl(20m(m-1)+8\bigr)
 =2(2m-1)(5m^2-5m+2),
\]
 and by rewriting the other two shifted factors similarly.

Now, put $m=n+1$ in~\eqref{eq:bmrecurrence} and move all terms except
$(n+1)^3b_{n+1}$ to the right. The result is exactly
\eqref{splitrec} with $b$ in place of $c$. Also,
\[
 b_0=\CT_{x,y,z}1=1.
\]
Since the coefficient $(n+1)^3$ never vanishes for $n\geq 0$, the
recurrence and the initial value determine the sequence uniquely.
Therefore $b_n=c_n$ for all $n\geq 0$.
\end{proof}

\begin{remark}
Fei's Table~2 associates the $V_{22}$ family with
$\Gamma_0^+(11)$~\cite[Table~2]{Fei}. This is consistent with the
level-$11$ interpretation. The proof above does not need a separate
modular-function identification: equality of the normalized period
coefficients follows already from the identical recurrence and initial
value.
\end{remark}

\section{Explicit extraction of the constant term}

Set
\begin{equation}
 A:=x+y+xy+yz=x(1+y)+y(1+z)
 \label{eq:Adef}
\end{equation}
and
\begin{equation}
 B:=x+z+xz+yz=x(1+z)+z(1+y).
 \label{eq:Bdef}
\end{equation}
Then
\[
 \Lambda=\frac{AB}{xyz}+\frac{1}{yz}.
\]

\begin{lemma}\label{lem:firstsum}
For every $n\geq 0$,
\begin{equation}
 b_n=
 \sum_{r=0}^{n}\binom{n}{r}
 \sum_{j=0}^{r}
 \binom{r}{j}^{2}
 \binom{2j}{n-r+j}
 \binom{2r-2j}{n-j}.
 \label{eq:rjsum}
\end{equation}
\end{lemma}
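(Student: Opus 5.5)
The plan is to compute $b_n=\CT_{x,y,z}\Lambda^n$ directly from the factorization $\Lambda=\frac{AB}{xyz}+\frac{1}{yz}$, using \eqref{eq:bn-ct} together with the definitions \eqref{eq:Adef} and \eqref{eq:Bdef}. Lemma~\ref{lem:recurrence} is not needed here, since the identity concerns $b_n$ itself.

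\emph{Step 1: expand in $r$.} The two summands of $\Lambda$ commute, so the binomial theorem gives
\[
\Lambda^n=\sum_{r=0}^{n}\binom{n}{r}\frac{A^rB^r}{x^ry^rz^r}\cdot\frac{1}{y^{n-r}z^{n-r}}
=\sum_{r=0}^{n}\binom{n}{r}A^rB^r\,x^{-r}y^{-n}z^{-n}.
\]
Hence $b_n=\sum_{r}\binom{n}{r}\,[x^ry^nz^n]\,(A^rB^r)$.

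\emph{Step 2: expand $A^r$ and $B^r$ in powers of $x$.} Write $A=x(1+y)+y(1+z)$ and $B=x(1+z)+z(1+y)$. Expanding each binomially gives
\[
A^r=\sum_{j}\binom{r}{j}x^j(1+y)^j\,y^{r-j}(1+z)^{r-j},\qquad
B^r=\sum_{k}\binom{r}{k}x^k(1+z)^k\,z^{r-k}(1+y)^{r-k}.
\]
Neither $y$ nor $z$ enters with $x$ in the remaining factors, so the $x$-degree of the product term is exactly $j+k$. Extracting $x^r$ therefore forces $k=r-j$, which yields the squared coefficient $\binom{r}{j}\binom{r}{r-j}=\binom{r}{j}^2$. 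With $k=r-j$, the factors collect to
\[
\binom{r}{j}^2\,x^r\,y^{r-j}z^{j}\,(1+y)^{2j}(1+z)^{2r-2j}.
\]

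\emph{Step 3: extract the $y$- and $z$-coefficients.} The $y$ and $z$ variables now separate. The coefficient of $y^n$ is $[y^{n-r+j}](1+y)^{2j}=\binom{2j}{n-r+j}$. The coefficient of $z^n$ is $[z^{n-j}](1+z)^{2r-2j}=\binom{2r-2j}{n-j}$. Multiplying these and summing over $0\le j\le r\le n$ gives \eqref{eq:rjsum}.

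There is no real obstacle. The only care needed is bookkeeping: confirm that $x$ appears only through the explicit $x$-factors, so that $k=r-j$ is forced, and adopt the convention that binomial coefficients with out-of-range lower index vanish. That convention lets the summation ranges be stated simply as $0\le j\le r\le n$.
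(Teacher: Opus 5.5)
Your proposal is correct and follows the paper's proof essentially step for step. Both expand $\Lambda^n$ binomially to reduce to $[x^ry^nz^n]A^rB^r$, and both expand $A^r$ and $B^r$ in powers of $x$ so that the second index is forced to equal $r-j$. Both then read off the $y$- and $z$-coefficients separately from $(1+y)^{2j}$ and $(1+z)^{2r-2j}$.
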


\begin{proof}
By the binomial theorem,
\begin{align}
 b_n
 &=\CT_{x,y,z}
 \sum_{r=0}^{n}\binom{n}{r}
 \left(\frac{AB}{xyz}\right)^r
 \left(\frac{1}{yz}\right)^{n-r}\notag\\
 &=\sum_{r=0}^{n}\binom{n}{r}
 [x^ry^nz^n]A^rB^r.
 \label{eq:ct1}
\end{align}
From~\eqref{eq:Adef} and~\eqref{eq:Bdef},
\begin{align}
 A^r
 &=\sum_{j=0}^{r}\binom{r}{j}
 x^jy^{r-j}(1+y)^j(1+z)^{r-j},
 \label{eq:Ar}\\
 B^r
 &=\sum_{\ell=0}^{r}\binom{r}{\ell}
 x^\ell z^{r-\ell}(1+z)^\ell(1+y)^{r-\ell}.
 \label{eq:Br}
\end{align}
In the product $A^rB^r$, the extraction of $x^r$ forces
$j+\ell=r$, hence $\ell=r-j$. Therefore
\begin{align*}
 [x^ry^nz^n]A^rB^r
 &=\sum_{j=0}^{r}\binom{r}{j}^{2}
 [y^{n-r+j}](1+y)^{2j}
 [z^{n-j}](1+z)^{2r-2j}\\
 &=\sum_{j=0}^{r}\binom{r}{j}^{2}
 \binom{2j}{n-r+j}
 \binom{2r-2j}{n-j}.
\end{align*}
Substitution in~\eqref{eq:ct1} proves~\eqref{eq:rjsum}.
\end{proof}

 This leads us to our main result. 

\begin{theorem}
 The binomial sum formula for $c_{11}(n)$ in \eqref{s11Aexplicit}
 holds for all nonnegative integers.
\end{theorem}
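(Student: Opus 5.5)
The plan is to show that the right-hand side of \eqref{s11Aexplicit} equals $b_n$. Lemma~\ref{lem:recurrence} then gives $b_n=c_n=c_{11}(n)$. I will attempt two routes in order.

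\emph{Route 1: a different constant-term expansion.} The shape of the summand suggests a splitting $\Lambda=T+M$ after a change of variables. Here $T$ would be a single monomial-type term chosen $a$ times, which accounts for $\binom{n}{a}$ and for the constraint $3a+b\le n$. The remaining part $M$ would be raised to the power $n-a$. Since $\CT$ is invariant under unimodular monomial substitutions $x\mapsto x^{\alpha}y^{\beta}z^{\gamma}$ (with the exponent matrix in $\mathrm{GL}_3(\mathbb Z)$), I am free to choose coordinates adapted to this splitting.

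I would first try $T=1/(yz)$ itself, possibly after such a substitution. Then I would look for an expansion of $(AB/xyz)^{n-a}$ that factors into four pieces:
\begin{itemize}[label=$\circ$]
\item the square $\binom{n-a}{a+b}^2$, coming from two binomial expansions of equal length, one for $A$ and one for $B$;
\item the factor $\binom{2a+2b}{b}$, from a power $(1+w)^{2a+2b}$;
\item the central-type factor $\binom{2n-4a-2b}{n-3a-b}$, from a power $(1+v)^{2(n-2a-b)}$ with the extracted exponent pinned by the leftover degree.
\end{itemize}
Concretely, I would rewrite $A=x(1+y)+y(1+z)$ and $B=x(1+z)+z(1+y)$ in coordinates where one of the products collapses, for example by using $(1+y)(1+z)$ jointly. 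I would then redo the extraction in the proof of Lemma~\ref{lem:firstsum}, tracking indices. Matching the exponents $n-3a-b$ and $n-a-(a+b)$ tells me which linear change of summation indices to seek between $(r,j)$ in \eqref{eq:rjsum} and $(a,b)$.

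\emph{Route 2, used if the direct rearrangement resists: identity of sums.} Start from \eqref{eq:rjsum} and expand one factor by Vandermonde. For instance, write $\binom{2r-2j}{n-j}=\sum_i\binom{r-j}{i}\binom{r-j}{n-j-i}$, and similarly for $\binom{2j}{n-r+j}$. This produces a quadruple sum of products of binomials. I would then reassemble it, using Vandermonde in reverse and the trinomial identity $\binom{n}{r}\binom{r}{j}=\binom{n}{j}\binom{n-j}{r-j}$, into the target double sum. The aim is an intermediate triple sum in which one index can be summed in closed form on each side.

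\emph{Route 3, the fallback: creative telescoping.} Let $F(n,a,b)$ denote the summand of \eqref{s11Aexplicit}; it is proper hypergeometric in all three variables. By the multivariable Zeilberger (Wegschaider/Apagodu--Zeilberger) method there exist polynomials $p_0,\dots,p_3$ in $n$ and rational certificates $R_1,R_2$ such that
\[
\sum_{i=0}^{3}p_i(n)F(n+i,a,b)=\Delta_a\bigl(R_1F\bigr)+\Delta_b\bigl(R_2F\bigr).
\]
I would aim for exactly the operator \eqref{splitrec}, shifted to $n+1$. If the algorithm outputs a higher-order recurrence instead, I would check that the recurrence \eqref{splitrec} right-divides it and compare enough initial values.

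Summing over the region $\{3a+b\le n\}$, the boundary terms vanish because the binomials vanish outside the region. Verifying $\sum F(n,a,b)=1,4,28,\ldots$ for small $n$ then finishes the argument, by the uniqueness noted in the proof of Lemma~\ref{lem:recurrence}.

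The main obstacle is finding the right substitution and index correspondence in Route~1, or, in Route~3, producing and checking the certificates. The certificate check is routine rational-function verification but unilluminating. I would first try Route~1 guided by the index matching above, then the Vandermonde reassembly, and keep creative telescoping as the guaranteed proof.
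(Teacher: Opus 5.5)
Your Route~1 is exactly the paper's argument, and it is simpler than you expect. No unimodular substitution or ``collapsing'' coordinates are needed: with $T=1/(yz)$, the extraction already carried out in Lemma~\ref{lem:firstsum} gives the target sum after a reindexing and two uses of binomial symmetry. The gap is that you never write that reindexing down. You call it the main obstacle, but it is the whole of the paper's proof.

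Put $a=n-r$ and $b=j-a$ in \eqref{eq:rjsum}, so that $r=n-a$ and $j=a+b$. Then:
$\binom{n}{r}=\binom{n}{a}$;
$\binom{r}{j}^{2}=\binom{n-a}{a+b}^{2}$;
$\binom{2j}{n-r+j}=\binom{2a+2b}{2a+b}=\binom{2a+2b}{b}$;
$\binom{2r-2j}{n-j}=\binom{2n-4a-2b}{n-a-b}=\binom{2n-4a-2b}{n-3a-b}$.

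Now the range. The factor $\binom{2j}{a+j}$ is nonzero only if $a\le j$, i.e.\ $b\ge 0$. The factor $\binom{2r-2j}{n-j}$ is nonzero only if $n-j\le 2r-2j$, i.e.\ $b\le n-3a$. This second condition makes the original constraint $j\le r$ (i.e.\ $b\le n-2a$) redundant. The remaining constraints $0\le r\le n$ and $j\ge 0$ become $0\le a\le n$ and $a+b\ge 0$. So the nonzero terms are exactly those with $a,b\ge 0$ and $3a+b\le n$, which is the summation range of \eqref{s11Aexplicit}.

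Once this is done, Routes~2 and~3 are superfluous. The paper mentions multivariate creative telescoping only as an optional independent check.

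If you did rely on Route~3, two points need care. First, ``the boundary terms vanish because the binomials vanish outside the region'' is not automatic. You must verify that $R_1F$ and $R_2F$ genuinely have finite support, i.e.\ that no pole of $R_i$ cancels a zero of $F$ on the boundary of the support. Second, that route is only a proof once the telescoper and certificates are actually produced and checked, not merely asserted to exist.
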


\begin{proof}
By Lemma~\ref{lem:recurrence}, $c_n=b_n$. Starting from
\eqref{eq:rjsum}, make the change of variables
\begin{equation}
 a:=n-r,\qquad b:=j-a.
 \label{eq:change}
\end{equation}
Thus
\[
 r=n-a,\qquad j=a+b.
\]
We now determine the exact nonzero range.

The factor
\[
 \binom{2j}{n-r+j}=\binom{2j}{a+j}
\]
is nonzero only when $a\leq j$, which is equivalent to $b\geq 0$.
The factor
\[
 \binom{2r-2j}{n-j}
\]
is nonzero only when
\[
 n-j\leq 2r-2j.
\]
Using $r=n-a$ and $j=a+b$, this inequality is equivalent to
\[
 b\leq n-3a.
\]
Thus the nonzero terms are exactly those with
$$
 0\leq a\leq \left\lfloor\frac n3\right\rfloor,
 \qquad 0\leq b\leq n-3a.
$$
On this range,
\begin{align*}
 \binom{n}{r}&=\binom{n}{a},\\
 \binom{r}{j}&=\binom{n-a}{a+b},\\
 \binom{2j}{n-r+j}
 &=\binom{2a+2b}{2a+b}
 =\binom{2a+2b}{b},\\
 \binom{2r-2j}{n-j}
 &=\binom{2n-4a-2b}{n-a-b}\\
 &=\binom{2n-4a-2b}{n-3a-b},
\end{align*}
where the last equality uses symmetry of binomial coefficients.
 Substituting these identities into~\eqref{eq:rjsum} gives
\eqref{s11Aexplicit}, completing the proof.
\end{proof}

 The only non-elementary input in the proof is the published Picard--Fuchs operator for the $V_{22}$ Laurent period. Once 
 Lemma~\ref{lem:recurrence} is established from that operator, the
constant-term extraction above is purely algebraic. Conversely,
one may use~\eqref{s11Aexplicit} as an explicit candidate and verify
\eqref{splitrec} independently by multivariate creative
telescoping; such a verification is supplementary rather than necessary
for the proof given here.

\subsection*{Statements and Declarations}

\noindent\textbf{Funding.}
No funding was received for conducting this study.

\medskip

\noindent\textbf{Competing interests.}
The author has no relevant financial or non-financial interests to
disclose.

\medskip

\noindent\textbf{Data availability.}
No datasets were generated or analyzed during the current study.
The results of the article are established by theoretical arguments.

\subsection*{Acknowledgements}
 The author acknowledges extensive interactions with GPT-5.6 Sol Pro during the exploratory and proof-development stages of this work. All 
 AI-generated suggestions were substantially revised, corrected, and independently verified by the author, who assumes full responsibility 
 for the mathematical content. The author's use of AI is fully in accordance with editorial standards on the responsible, ethical, and 
 transparent use of AI. 
 
\bibliographystyle{plain}
\bibliography{seprefe}

@article{Ramanujan1914,
 author = {Ramanujan, S.},
 title = {Modular equations and approximations to {{\(\pi\)}}.},
 fjournal = {The Quarterly Journal of Pure and Applied Mathematics},
 journal = {Quart. J.},
 volume = {45},
 pages = {350--372},
 year = {1914},
 language = {English},
 url = {www.imsc.res.in/~rao/ramanujan/CamUnivCpapers/Cpaper6/page1.htm},
 zbMATH = {2619245},
 JFM = {45.1249.01}
}

@incollection {Golyshev2007,
    AUTHOR = {Golyshev, Vasily V.},
     TITLE = {Classification problems and mirror duality},
 BOOKTITLE = {Surveys in geometry and number theory: reports on contemporary
              {R}ussian mathematics},
    SERIES = {London Math. Soc. Lecture Note Ser.},
    VOLUME = {338},
     PAGES = {88--121},
 PUBLISHER = {Cambridge Univ. Press, Cambridge},
      YEAR = {2007},
      ISBN = {978-0-521-69182-6; 0-521-69182-6},
   MRCLASS = {14J45},
  MRNUMBER = {2306141},
MRREVIEWER = {Alexandr\ V.\ Pukhlikov},
       DOI = {10.1017/CBO9780511721472.004},
       URL = {https://doi.org/10.1017/CBO9780511721472.004},
}

@article {HuberSchultzYe2023,
    AUTHOR = {Huber, Tim and Schultz, Daniel and Ye, Dongxi},
     TITLE = {Ramanujan-{S}ato series for {$1/\pi $}},
   JOURNAL = {Acta Arith.},
  FJOURNAL = {Acta Arithmetica},
    VOLUME = {207},
      YEAR = {2023},
    NUMBER = {2},
     PAGES = {121--160},
      ISSN = {0065-1036,1730-6264},
   MRCLASS = {11F03 (11F11)},
  MRNUMBER = {4564300},
MRREVIEWER = {Jia-Wei\ Guo},
       DOI = {10.4064/aa220621-19-12},
       URL = {https://doi.org/10.4064/aa220621-19-12},
}

@article {Fei,
    AUTHOR = {Fei, Jiarui},
     TITLE = {Mahler measure of 3{D} {L}andau-{G}inzburg potentials},
   JOURNAL = {Forum Math.},
  FJOURNAL = {Forum Mathematicum},
    VOLUME = {33},
      YEAR = {2021},
    NUMBER = {5},
     PAGES = {1369--1401},
      ISSN = {0933-7741,1435-5337},
   MRCLASS = {11R06 (11F67)},
  MRNUMBER = {4308625},
MRREVIEWER = {James\ McKee},
       DOI = {10.1515/forum-2020-0339},
       URL = {https://doi.org/10.1515/forum-2020-0339},
}

@book {Berndt1994,
    AUTHOR = {Berndt, Bruce C.},
     TITLE = {Ramanujan's notebooks. {P}art {IV}},
 PUBLISHER = {Springer-Verlag, New York},
      YEAR = {1994},
     PAGES = {xii+451},
      ISBN = {0-387-94109-6},
   MRCLASS = {11B68 (01A75 11F11 11N05)},
  MRNUMBER = {1261634},
MRREVIEWER = {Don\ Redmond},
       DOI = {10.1007/978-1-4612-0879-2},
       URL = {https://doi.org/10.1007/978-1-4612-0879-2},
}

@article {Cooper2012,
    AUTHOR = {Cooper, Shaun},
     TITLE = {Level 10 analogues of {R}amanujan's series for {$1/\pi$}},
   JOURNAL = {J. Ramanujan Math. Soc.},
  FJOURNAL = {Journal of the Ramanujan Mathematical Society},
    VOLUME = {27},
      YEAR = {2012},
    NUMBER = {1},
     PAGES = {59--76},
      ISSN = {0970-1249,2320-3110},
   MRCLASS = {11Yxx (11Fxx)},
  MRNUMBER = {2933486},
}

@book {Cooper2017,
    AUTHOR = {Cooper, Shaun},
     TITLE = {Ramanujan's theta functions},
 PUBLISHER = {Springer, Cham},
      YEAR = {2017},
     PAGES = {xviii+687},
      ISBN = {978-3-319-56171-4; 978-3-319-56172-1},
   MRCLASS = {11-02 (05A15 05A19 11F11 11F27 33-02)},
  MRNUMBER = {3675178},
MRREVIEWER = {Jeremy\ Lovejoy},
       DOI = {10.1007/978-3-319-56172-1},
       URL = {https://doi.org/10.1007/978-3-319-56172-1},
}

@incollection {Cooper2025,
    AUTHOR = {Cooper, Shaun},
     TITLE = {Ap\'ery-like sequences defined by four-term recurrence
              relations},
 BOOKTITLE = {Classical hypergeometric functions and generalizations},
    SERIES = {Contemp. Math.},
    VOLUME = {818},
     PAGES = {137--179},
 PUBLISHER = {Amer. Math. Soc., [Providence], RI},
      YEAR = {2025},
      ISBN = {978-1-4704-7653-3; [9781470479381]},
   MRCLASS = {11F11},
  MRNUMBER = {4901163},
MRREVIEWER = {Brundaban\ Sahu},
       DOI = {10.1090/conm/818/16373},
       URL = {https://doi.org/10.1090/conm/818/16373},
}

@article {CooperGeYe2015,
    AUTHOR = {Cooper, Shaun and Ge, Jinqi and Ye, Dongxi},
     TITLE = {Hypergeometric transformation formulas of degrees 3, 7, 11 and
              23},
   JOURNAL = {J. Math. Anal. Appl.},
  FJOURNAL = {Journal of Mathematical Analysis and Applications},
    VOLUME = {421},
      YEAR = {2015},
    NUMBER = {2},
     PAGES = {1358--1376},
      ISSN = {0022-247X,1096-0813},
   MRCLASS = {33C20 (33E05)},
  MRNUMBER = {3258323},
MRREVIEWER = {Christian\ Lavault},
       DOI = {10.1016/j.jmaa.2014.07.061},
       URL = {https://doi.org/10.1016/j.jmaa.2014.07.061},
}

\end{document}